\documentclass[reqno,11pt]{amsart}
\usepackage{a4wide,color,eucal,enumerate,mathrsfs}
\usepackage[normalem]{ulem}
\usepackage{amsmath,amssymb,epsfig,amsthm} 
\usepackage[latin1]{inputenc}
\usepackage{psfrag}
\usepackage{hyperref,cleveref}

\def\loc{{\mathop\mathrm{\,loc\,}}}

\def\ez{\epsilon}
\def\eps{\varepsilon}

\def\bint{{\ifinner\rlap{\bf\kern.35em--}
\int\else\rlap{\bf\kern.45em--}\int\fi}\ignorespaces}

\def\bbint{{\ifinner\rlap{\bf\kern.35em--}
\hspace{0.078cm}\int\else\rlap{\bf\kern.45em--}\int\fi}\ignorespaces}

\def\diam{{\mathop\mathrm{\,diam\,}}}

\def\epsilon{\varepsilon}

\newtheorem{thm}{Theorem}[section]
\newtheorem{prop}[thm]{Proposition}
\newtheorem{conj}[thm]{Conjecture}
\numberwithin{equation}{section}

\theoremstyle{remark}

\def\bint{{\ifinner\rlap{\bf\kern.35em--}
\int\else\rlap{\bf\kern.45em--}\int\fi}\ignorespaces}

\usepackage{graphicx}
\usepackage{import}
\usepackage{xifthen}
\usepackage{pdfpages}
\usepackage{transparent}

\title[Bounded weak quasiregular mappings that fail to be quasiregular]{Bounded Continuous weak quasiregular mappings that fail to be quasiregular}
\author{Stanislav Hencl and Yi Ru-Ya Zhang}
\date{\today}

\address{Department of Mathematical Analysis, Charles University,
	So\-ko\-lovsk\'a 83, 186~00 Prague 8, Czech Republic}
\email{hencl@karlin.mff.cuni.cz}

\address{State Key Laboratory of Mathematical Sciences, Academy of Mathematics and Systems Science, Chinese Academy of Sciences, Beijing 100190, China}
\address{Academy of Mathematics and Systems Science, the Chinese Academy of Sciences, Beijing 100190, China}
\email{yzhang@amss.ac.cn}

 \thanks{ The first author was supported by the grant GA\v{C}R P201/24-10505S. The last author is funded by the National Key R\&D Program of China (Grant No. 2025YFA1018400 \&  No. 2021YFA1003100), NSFC grant No. 12288201 \& No. 12571128, the Chinese Academy of Sciences, and CAS Project for Young Scientists in Basic Research, Grant No. YSBR-031. }

\subjclass[2020]{30C65, 46E35 }
\keywords{Weakly quasiregular mappings, quasiregular mappings, distributional Jacobian.}

\begin{document}
\begin{abstract}
We show that, in dimensions $n\geq 3$, continuity and boundedness do not
restore the Sobolev regularity conjecture of Iwaniec and Martin for weakly
quasiregular mappings below the critical exponent. For every bounded domain
$\Omega\subset\mathbb R^n$ and every $1\leq p<nK/(K+1)$, we construct a
bounded continuous weakly $K$-quasiregular mapping
$$
    f\in W^{1,\,p}(\Omega;\,\mathbb R^n)\cap C(\Omega;\,\mathbb R^n)
   \cap L^\infty(\Omega;\mathbb R^n)
$$
which fails to be quasiregular. We further construct weakly quasiregular
mappings whose singular sets have Hausdorff dimension arbitrarily close to the
maximal size permitted by their Sobolev regularity. These examples show 
that, the almost-everywhere sign condition on the Jacobian is too weak to serve as an orientation-preserving hypothesis below $W^{1,n}$. In contrast, we show that, for $n-1<p<n$, quasiregularity follows once this condition is replaced by a one-sided condition on the distributional degree (together with boundedness).
\end{abstract}
 
\maketitle
\section{Introduction}
Let $\Omega\subset \mathbb R^n$ be an open set and $f\colon\Omega\to \mathbb R^n$ be a mapping of Sobolev class $W^{1,\,p}_{\loc}(\Omega;\,\mathbb R^n), p\ge 1$. We write $Df$ as its distributional gradient, and denote its Jacobian determinant by $J_f$. 
Then a mapping $f\in W^{1,\,p}_{\loc}(\Omega;\,\mathbb R^n)$ is said to be weakly $K$-quasiregular if $f$ satisfies
$J_f\ge 0 $ a.e. or $J_f\le 0$ a.e., and
$$\max_{\xi\in \mathbb S^{n-1}} |Df(x)\xi|\le K \min_{\xi\in \mathbb S^{n-1}} |Df(x)\xi| \ \text{ a.e.}$$
If it is further assumed that $f\in W^{1,\,n}$, then we say that $f$ is $K$-quasiregular.

A remarkable result by Iwaniec--Martin \cite[Theorem 1]{IM1993} states that, when $n=2l\ge 4, l\in\mathbb N$, a weakly $1$-quasiregular mapping $f\in W^{1,\,l}_{\loc}(\Omega;\,\mathbb R^n)$ is indeed $1$-quasiregular. Moreover, they give an example that, for each $p\in \left[1,\,\frac{nK}{K+1}\right)$, there is a weakly $K$-quasiregular mapping $f\in W^{1,\,p}_{\loc}(\Omega;\,\mathbb R^n)$ which is not quasiregular. 
Then they formulated the following conjecture \cite[Conjecture 12.9]{IM1993}: 
\begin{conj}
    Every weakly $K$-quasiregular mapping $f\in W^{1,\,p}_{\loc}(\Omega;\,\mathbb R^n)$ with $p\ge \frac{nK}{K+1}$ is $K$-quasiregular. 
\end{conj}
Meanwhile, they also  conjectured that a weaker $L^p$-version holds:
\begin{equation}\label{weak IM}
    f\in L^{nK}_{\loc}(\Omega;\,\mathbb R^n) \  \text{ is weakly $K$-quasiregular } \quad \Longrightarrow \quad  f \text{ is  $K$-quasiregular. }
\end{equation}

When $n=2$, the conjectured range by Iwaniec--Martin  for the Sobolev regularity is known to be optimal; see \cite[Theorem 14.4.3 \& Theorem 14.4.6.]{AIM2009}, and also \cite{A1994, AIS2001, PV2002} together with the reference therein for the literature. Especially,  the continuity of a weakly quasiregular mapping (or a solution to the Beltrami equation) is a decisive factor in determining its quasiregularity in \cite{PV2002}.

Later, the problem for $n\ge 3$ has been studied via quasiconvexity method in the calculus of variation \cite{Y1996, Y2001, I2002}. Moreover, 
Zhuomin Liu gave an alternative proof of Liouville theorem via a perturbation method in \cite{L2013}, and also proved the rigidity of conformal Sobolev maps under a second-order differentiability assumption \cite{L20132}. 

In this manuscript, we construct the following example to disprove \eqref{weak IM}, which is a genuine sharpening of the original example by Iwaniec-Martin.
\begin{thm}\label{main thm}
    When $n\ge3$, for any bounded domain $\Omega\subset \mathbb R^n$ and every $p\in \left(1,\,\frac{nK}{K+1}\right)$, there is a weakly $K$-quasiregular mapping 
    $$f\in W^{1,\,p}(\Omega;\,\mathbb R^n)\cap C(\Omega;\,\mathbb R^n)
   \cap L^\infty(\Omega;\mathbb R^n)$$
    which is not quasiregular.
\end{thm}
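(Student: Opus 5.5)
We build the map from the Iwaniec--Martin example and then make it bounded and continuous. Their example is the radial inversion-type map $x\mapsto x|x|^{-1-1/K}$ (up to normalisation), composed with a reflection or a Möbius-type modification. It is weakly $K$-quasiregular and lies in $W^{1,p}$ exactly for $p<nK/(K+1)$. However, it blows up at the origin, so it is neither bounded nor continuous there. Our plan is to replace the point singularity by a singularity that the map \emph{folds} rather than blows up, while keeping the Jacobian sign constant a.e.

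Concretely, we start from the radial map
$$ g(x)=\frac{x}{|x|}\,\rho(|x|),\qquad \rho(r)=r^{-1/K}, $$
restricted to the unit ball $B$. Its two stretches are $|\rho'(r)|=\tfrac1K r^{-1/K-1}$ in the radial direction and $\rho(r)/r=r^{-1/K-1}$ in the tangential directions. Their ratio is exactly $K$. Because $\rho'<0$, the Jacobian sign is constant (negative) on $B\setminus\{0\}$, and $|Dg|\approx r^{-1-1/K}$ lies in $L^p$ iff $p<nK/(K+1)$.

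To get boundedness and continuity, we post-compose with the inversion $y\mapsto y/|y|^2$. The composition is $x\mapsto \frac{x}{|x|}r^{1/K}$. This is bounded and continuous, but it is quasiregular, so it is useless by itself. The key idea instead is to use a \emph{cylindrical/$(n-1)$-dimensional} version, which is where $n\ge3$ enters. We write $x=(x',x_n)$ and choose the map so that the singular set is a segment or a Cantor-type set. We glue countably many rescaled copies of a ``folding'' building block $h_j$ in disjoint balls $B_j\subset\Omega$ with radii $r_j\to0$, accumulating at one point.

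Each block $h_j$ should satisfy the following. It equals a fixed orientation-reversing conformal-affine map near $\partial B_j$, so the pieces glue to a global map with $J\le0$ a.e. after also reflecting the background. It is $K$-quasiconformal in distortion a.e. Its image is contained in a ball of radius $\to0$, which gives continuity and boundedness. And it has distributional degree of the wrong sign, i.e.\ it genuinely folds. Such a block can be built as a two-sheeted composition: first the inverted radial map, which sends a neighbourhood of the centre to a neighbourhood of infinity with reversed orientation, and then a map turning infinity back to a bounded point through a chart in which the needed sign change happens only on a null set. The non-$W^{1,n}$ behaviour is forced by the $r^{-1-1/K}$ rate. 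The blocks' $W^{1,p}$ norms scale like $r_j^{\,n/p-1-1/K}\cdot(\text{size})$, which is summable for $p<nK/(K+1)$ after choosing the image sizes $\varepsilon_j\to0$ appropriately.

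To show the final map $f$ is not quasiregular, we argue by contradiction. A quasiregular map is discrete and open, and it has local degree $\ge1$ with a consistent sign. Near each $B_j$, the block produces points whose local index is of the opposite sign to the a.e. Jacobian sign. Equivalently, $f\notin W^{1,n}$ on any neighbourhood of the accumulation point. The second version is the easier one to verify: we simply check that $\int_{B_j}|Dh_j|^n=\infty$, or that it is bounded below uniformly, by the radial computation.

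The main obstacle is the construction of the single bounded, continuous block with constant Jacobian sign and distortion exactly $\le K$. Making the inverted singularity bounded without creating a region of opposite orientation is the delicate part. Our first attempt would be a map of the form $x\mapsto \Phi(|x|)\,\sigma(x/|x|)$, where $\sigma$ is a degree-zero-compatible map of the sphere. For $n\ge3$, such a $\sigma$ can be chosen as a branched folding of $\mathbb S^{n-1}$, with $\Phi$ tuned to balance radial and tangential distortions.
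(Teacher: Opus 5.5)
Your proposal has a genuine gap. The building block $h_j$ is only described by a list of required properties and is never constructed. Its existence is essentially the whole theorem: a single bounded continuous map on a ball with $J\le 0$ a.e., distortion $\le K$ and $\int|Dh|^n=\infty$ is already the counterexample, so gluing countably many rescaled copies adds nothing.

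Your fallback non-quasiregularity test is also inconsistent. For $h_j(x)=\varepsilon_j h((x-c_j)/r_j)$ one has $\int_{B_j}|Dh_j|^n=\varepsilon_j^n\int|Dh|^n$, so a uniform lower bound with $\varepsilon_j\to0$ forces $\int|Dh|^n=\infty$. Moreover, with orientation-reversing affine boundary values the degree on $B_j$ is $\le 0$. The ``wrong sign'' would therefore have to live on interior subdomains that you never produce.

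The mechanisms you suggest for the block cannot work either.
\begin{enumerate}
\item No injective block can work. For a $W^{1,1}$ homeomorphism, $\int_B|Df|^n\le K^{n-1}\int_B|J_f|\le K^{n-1}|f(B)|<\infty$, so it is automatically quasiconformal. Hence composing the inverted radial map with an injective ``chart'' gives nothing, as you observed for the inversion.
\item The ansatz $\Phi(|x|)\sigma(x/|x|)$ fails in both cases. If $\sigma$ really folds, $J$ changes sign on sets of positive measure. If $J_\sigma$ has constant sign, then $\Phi$ must be monotone and the distortion bound forces $|(\log\Phi)'(r)|\gtrsim 1/(Kr)$. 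A decreasing profile then blows up at least like $r^{-c/K}$. An increasing one gives $\int_B|J_f|<\infty$, hence $f\in W^{1,n}$, and $f$ is quasiregular.
\end{enumerate}
A bounded fold cannot be produced by a single point singularity.

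What is missing is the idea of spreading the singularity over infinitely many scales. The paper uses the inversion $\Phi_B$ only on annuli $B_{k,j}\setminus aB_{k,j}$. On each core $aB_{k,j}$ it puts the positive dilation by $a^{-k(1+1/K)}$, which matches $\Phi_B$ on $\partial(aB_{k,j})$. It then repacks $U_{k+1}=\bigcup_j aB_{k,j}$ by balls of radius $\le\delta_{k+1}$ and repeats. This gives four properties:
\begin{enumerate}
\item The set where $J>0$ has measure $a^{nk}|\Omega|\to0$, so the limit has $J_F\le 0$ a.e.\ and distortion $K$.
\item The choice $\sum_k a^{-k(1+1/K)}\delta_k<\infty$ gives $\|F_k-F_{k-1}\|_\infty\lesssim a^{-k(1+1/K)}\delta_k$, hence a bounded continuous uniform limit.
\item The $p$-energy is at most $C\sum_k a^{k(n-p(K+1)/K)}|\Omega|$, which is finite exactly when $p<nK/(K+1)$.
\item Since $F$ is a positive dilation on every $\partial B_{k,j}$, $\deg(F,B_{k,j},\cdot)=1$ on an open set. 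This is impossible for a $W^{1,n}$ map with $J_F\le0$ a.e.
\end{enumerate}
Nothing in this argument uses $n\ge 3$, so the ``cylindrical'' mechanism you invoke is not where the difficulty lies.
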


By a similar construction as in Theorem~\ref{main thm}, we obtain the following.
\begin{thm}\label{dimension}
Let $n\geq 3$, $K\geq 1$, and let $\epsilon>0$. 
Then there exists a weakly $K$-quasiregular mapping
$$
    f\in W^{1,1}_{\operatorname{loc}}(\Omega;\mathbb R^n)
$$
whose singular set
$$    S_f    :=    \left\{        x\in\Omega:        \limsup_{r\to 0^+}        \bint_{B(x,r)} |f|\,dx        =\infty    \right\}
$$
has Hausdorff dimenson at least 
$$    \min\left\{\frac{nK}{K+1},\,n-1\right\} -\epsilon.$$
    
    In particular, for any $p\in \left(1,\,n\right)$, one can find a weakly $K_p$-quasiregular mapping $f\in W^{1,\,p}_{\loc}(\Omega;\,\mathbb R^n)$ with $K_p=K(n,\,p)$ so that the Hausdorff dimension of the singular set of $f$ is at least $n-p-\eps$.
\end{thm}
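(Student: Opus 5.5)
The plan is to build $f$ as a locally finite sum, or rather a piecewise gluing, of rescaled copies of the Iwaniec--Martin radial singular map, placed at the points of a Cantor-type set $E\subset\Omega$ of dimension close to the target. The basic building block is the radial map $g(x)=|x|^{-\alpha-1}x$, or more precisely its inversion-composed variant $g(x)=\frac{x}{|x|}\,|x|^{-\alpha}$ with $\alpha>0$. This map is weakly $K$-quasiregular with $K=\max\{\alpha,1\}$ up to orientation: it has radial stretch $\alpha|x|^{-\alpha-1}$ and tangential stretch $|x|^{-\alpha-1}$, and its Jacobian has constant sign after composing with a reflection. It lies in $W^{1,p}_{\loc}$ exactly when $(\alpha+1)p<n$. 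Its mean $\bint_{B(0,r)}|g|$ blows up like $r^{-\alpha}$, so $0$ is a singular point. Taking $\alpha=K$ gives the borderline $p<n/(K+1)$. The Iwaniec--Martin example reaches $p<nK/(K+1)$, so I would instead use their map, which is the version with tangential stretch $K$ times radial and decay exponent $\alpha=1/K$, i.e. $g(x)=|x|^{1/K-1}x$ composed with the inversion. That map lies in $W^{1,p}$ precisely for $p<nK/(K+1)$.

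\textbf{Gluing.} The key step is to glue many such singularities without destroying the sign of the Jacobian or the distortion bound. At each Cantor generation $k$ we have $N_k$ disjoint balls $B(x_i,r_k)$. Inside each ball I would place a "bubble": a map equal to the identity (suitably scaled) near $\partial B(x_i,r_k)$ and equal to a rescaled singular map near $x_i$. The interpolation region is an annulus, where a radial map $\rho(|x|)\frac{x}{|x|}$ is used. Such a radial map is weakly $K$-quasiregular as long as $\rho$ is monotone with $\rho'/(\rho/|x|)\in[1/K,K]$. Both the identity and $|x|^{-1/K}$ (with reversed orientation via inversion) can be interpolated this way if we allow the orientation to be that of the inversion throughout. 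To make this work I would use $x\mapsto |x|^{-\beta}x$ blocks throughout, so that every piece has $\rho$ decreasing and the Jacobian is everywhere $\le0$.

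\textbf{Dimension count.} For $p<n$ one chooses $K_p$ with $n-p\le \min\{nK/(K+1),n-1\}$ appropriately. Convergence in $W^{1,1}_{\loc}$ (or $W^{1,p}$) holds when $\sum_k N_k r_k^{\,n-(1+1/K)p}<\infty$. This requires, roughly, $N_k\approx r_k^{-s}$ with $s<n-p(K+1)/K$. Passing through the $\limsup$ shows each Cantor point is singular, since the bubbles nest. A standard mass distribution argument then gives $\dim_H E\ge s$, and optimizing over $p$ near $1$ gives the stated bound up to $\epsilon$. The cap at $n-1$ appears because nested annuli must fit inside the balls with bounded modulus of the gluing annuli.

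\textbf{Main obstacle.} The hardest step is making the nested gluing consistent. Each inner bubble must match the outer radial map on a sphere, yet radial matching only works around one center. I expect to handle this by letting each bubble's boundary values be a similarity, since the outer map is conformal-like on small spheres only up to distortion. So I would first try replacing exact matching by a bi-Lipschitz interpolation with distortion close to $1$ on thin annuli, and check that the resulting distortion and sign persist.
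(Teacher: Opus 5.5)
There is a genuine gap at the step you flag as the main obstacle, and the proposed fix does not work. Off its centre, the singular radial map is not conformal-like. At $\mathbf x_0\neq 0$ the differential $A$ of $\mathbf x\mapsto \mathbf x|\mathbf x|^{-1-1/K}$ has distortion exactly $K$ and $\det A<0$, so there is no near-similarity to interpolate to. The natural bubble with boundary values $\approx A$, namely $A\circ\Phi$, has Jacobian $\det A\cdot J_\Phi>0$ and distortion up to $K^2$. Making $f$ a scaled identity on a neighbourhood of $\partial B(x_i,r_k)$ instead puts positive Jacobian on a set of positive measure. And if every piece is a decreasing radial map about $x_i$, there is no room left for the next generation.

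The missing idea, which is how the paper proceeds, is that a radial map coincides \emph{exactly} with an orientation-preserving similarity on every sphere centred at its centre. One uses $\Phi$ only on the annulus $B_{k,j}\setminus aB_{k,j}$. On $aB_{k,j}$ one continues by the similarity $\mathbf x\mapsto F_{k-1}(\mathbf y_{k,j})+a^{-k(1+1/K)}(\mathbf x-\mathbf y_{k,j})$, and places the next generation via an exact packing of $aB_{k,j}$. Matching is then exact with no interpolation, every annulus piece has distortion exactly $K$, and $J_F\le 0$ a.e.\ because the similarity regions satisfy $|U_k|=a^{n(k-1)}|\Omega|\to 0$. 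This truncation also produces the blow-up at Cantor points, which are not bubble centres. An untruncated bubble with boundary values $\mathbf z+t(\mathbf x-\mathbf y)$ has average of $|f-\mathbf z|$ over its ball comparable to $tr$. So the averages diverge only because the accumulated dilations $a^{-(k-1)(1+1/K)}$ times the radii $\delta_k=(1+\delta)^k a^{(1+1/K)k}$ grow like $(1+\delta)^k$; ``the bubbles nest'' does not give this.

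Your dimension count is not the relevant one and does not yield the statement. The sum $\sum_k N_k r_k^{\,n-(1+1/K)p}$ models unit-size, untruncated point singularities. In the nested construction the energy is $\lesssim\sum_k a^{-kp(1+1/K)}|U_k|$, which is independent of the radii and finite iff $p<nK/(K+1)$. The dimension is instead dictated by the requirement that the images grow. Children have radius $\approx(1+\delta)a^{1+1/K}$ times their parent's, so each parent contains $\approx a^{-n/K}$ of them, and the Cantor set has dimension $\approx n/(K+1)$ as $a\to 0$. In any case, your value at $p\to 1$, namely $n-1-1/K$, is not $\min\{nK/(K+1),n-1\}$; for $n=3$, $K=1$ it gives $1$ instead of $3/2$.

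To reach the stated bound you need precisely the block you discarded, $\Psi(\mathbf x)=\mathbf x|\mathbf x|^{-1-K}$. With $\delta_k=(1+\delta)^k a^{(1+K)k}$ each parent holds $\approx a^{-nK}$ children, the dimension is $\approx nK/(K+1)$, and the construction lies in $W^{1,p}$ for $p<n/(K+1)$. Using it with $K'=\min\{K,n-1-\eta\}$ keeps $n/(K'+1)>1$, so $f\in W^{1,1}_{\mathrm{loc}}$. This requirement, not a modulus condition on gluing annuli, is the source of the cap $n-1$. The Iwaniec--Martin block gives only $n/(K+1)<nK/(K+1)$ when $K>1$. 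The paper uses it only for $p\ge n/2$ in the second assertion, with $K_p\downarrow p/(n-p)$, and uses $\Psi$ with $K_p\uparrow n/p-1$ for $p<n/2$. Minor points: the distortion of $\mathbf x|\mathbf x|^{-1-\alpha}$ is $\max\{\alpha,1/\alpha\}$, not $\max\{\alpha,1\}$, and its Jacobian is already negative, so no reflection is needed.
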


This result tells that, below the critical exponent, weak quasiregularity does not reduce the worst-case Sobolev singular-set size. Moreover, this is naturally related to the conjectured optimal Hausdorff dimension $\frac n{K+1}$ of the removable/nonremovable singular set for (bounded) $K$-quasiregular mappings, which was studied in \cite{IM1993} as well; see also \cite{GMP2017}.

By Theorem~\ref{main thm}, to study the conjecture of Iwaniec--Martin, it is not very restrictive to assume that $f$ is bounded or even continuous when $p\in \left[1,\,\frac{nK}{K+1}\right)$. Now for $n-1<p<n$, define the distributional Jacobian of $f\in W^{1,\,p}_{\loc}(\Omega;\,\mathbb R^n)\cap L^\infty(\Omega;\,\mathbb R^n)$ by
$$\langle\mathcal J_f,\,\varphi\rangle=-\frac 1 n\int_{\Omega} ({\rm adj}Df \cdot f) \cdot D\varphi\, dx,$$
and  for any $D\subset\subset \Omega$  Lipschitz, the distributional degree of $f$ on $D$ by
$$\int_{\mathbb R^n} {\rm Deg}(f,\,D,\,y) {\rm div}(g(y))\,dy=\int_{\partial D} ({\rm adj}Df \cdot g(f(x))\cdot \nu_{D}\,d\mathscr H^{n-1}(x), \quad   \forall g\in C^1(\Omega),
$$
where ${\rm adj} Df$ satisfies
$$({\rm adj} Df) Df= J_f \,Id;$$
see e.g. \cite{D2012} for more details. Note that for continuous $f$ the above notion of distributional degree actually coincides with the usual topological degree (see \cite{D2012}). 
In particular, according to \cite[Theorem 1.1 \& Proposition 5.4]{D2012} (see also \cite{M1990, M1993, DG2010}), we have the following proposition.
\begin{prop}\label{positive part}
Let $n-1<p<n$, and let
$$
    f\in W^{1,p}_{\operatorname{loc}}(\Omega;\mathbb R^n)
        \cap L^\infty_{\operatorname{loc}}(\Omega;\mathbb R^n).
$$
Assume that $f$ weakly preserves orientation in the sense that, for every
$x\in\Omega$ and for almost every
$0<r<\operatorname{dist}(x,\partial\Omega)$,
$$
    \operatorname{Deg}(f,B(x,r),y)\ge 0
    \quad\text{for a.e. }y\in\mathbb R^n .
$$
Assume moreover that
$$
    |Df(x)|^n \le K\, |\det Df(x)|
    \quad\text{for a.e. }x\in\Omega .
$$
Then
$$
    f\in W^{1,n}_{\operatorname{loc}}(\Omega;\mathbb R^n).
$$
In particular, $f$ is quasiregular.
The same conclusion holds with nonpositive degree, after composing $f$
with an orientation-reversing isometry of $\mathbb R^n$.
\end{prop}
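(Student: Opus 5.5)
The plan is to deduce the statement from \cite[Theorem 1.1 \& Proposition 5.4]{D2012}, after checking that the hypotheses here match those results. The first step uses $n-1<p<n$ and local boundedness of $f$. These guarantee that the cofactor matrix ${\rm adj}Df\in L^{p/(n-1)}_{\loc}$ with $p/(n-1)>1$. Hence ${\rm adj}Df\cdot f\in L^1_{\loc}$, and the distributional Jacobian $\mathcal J_f$ and the distributional degree ${\rm Deg}(f,B(x,r),\cdot)$ are well defined for a.e. radius $r$. The latter is given by the boundary integral over $\partial B(x,r)$, using Fubini to obtain $f|_{\partial B(x,r)}\in W^{1,p}$ for a.e. $r$. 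Note that the restriction to spheres is in $W^{1,p}$ with $p>n-1$, which is the supercritical case on the $(n-1)$-sphere, so $f|_{\partial B}$ is continuous for a.e. $r$.

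Next I would exploit the degree condition to show that $\mathcal J_f$ is a nonnegative distribution, hence a Radon measure. The mechanism is the standard formula for radial test functions $\varphi(z)=\eta(|z-x|)$:
$$\langle \mathcal J_f,\varphi\rangle=-\int_0^\infty \eta'(r)\int_{\mathbb R^n}{\rm Deg}(f,B(x,r),y)\,dy\,dr .$$
This follows from the definition by taking $g(y)=y/n$, so that ${\rm div}\, g=1$, together with the coarea formula. Thus $\mathcal J_f(B(x,r))=\int{\rm Deg}(f,B(x,r),y)\,dy\ge 0$ for a.e. $r$. Positivity on balls, combined with the structure of the distributional degree, should give $\mathcal J_f\ge0$ as a measure. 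This is exactly the content of \cite[Proposition 5.4]{D2012}, which I would quote instead of reproving.

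The key step is then \cite[Theorem 1.1]{D2012} (in the spirit of M\"uller \cite{M1990}). If $\mathcal J_f\ge 0$ is a measure, its absolutely continuous part equals the pointwise Jacobian $J_f=\det Df$. The singular part must vanish once the distortion inequality $|Df|^n\le K|\det Df|$ holds. A convenient route is to note that the inequality forces $\det Df\ge0$ a.e. by the degree condition (the a.e. sign agrees with the sign of $\mathcal J_f^{ac}$). Then a local $L\log L$ or reverse-H\"older type estimate upgrades integrability. In particular, mass bounds on balls $\mathcal J_f(B(x,r))\lesssim \|f\|_\infty^{\,}\int_{\partial B}|{\rm adj}Df|$ combined with $|{\rm adj}Df|\le|Df|^{n-1}$ and $|Df|^n\le KJ_f$ give a Caccioppoli-type inequality. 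This yields $\int_{B_r}|Df|^n\le C\,\|f\|_\infty\, r^{-1}\int_{B_{2r}}|Df|^{n-1}$, from which $Df\in L^n_{\loc}$ follows. Once $f\in W^{1,n}_{\loc}$ with $J_f\ge0$ and bounded distortion, $f$ is $K$-quasiregular by definition. In the nonpositive-degree case one composes with a reflection, which flips the degree sign and preserves distortion.

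The main obstacle is the step excluding a singular part of $\mathcal J_f$. In particular, turning the degree sign into $\det Df\ge 0$ a.e. and into an $L^n$ bound requires the fine results of \cite{D2012}, and I would rely on them directly rather than re-derive them.
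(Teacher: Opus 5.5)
Your proposal is correct and follows essentially the paper's route: positivity of $\mathcal J_f$ from \cite[Proposition 5.4]{D2012}, then $\mathcal J_f^{ac}=\det Df\,\mathcal L^n$ for $p>n-1$, then $\int_U|Df|^n\le K\int_U\det Df\le K\,\mathcal J_f(U)<\infty$. Your Caccioppoli bound $\int_{B_r}|Df|^n\le C\|f\|_{\infty}\,r^{-1}\int_{B_{2r}}|Df|^{n-1}$ is only a quantitative replacement for the paper's appeal to local finiteness of the Radon measure $\mathcal J_f$; note also that no step ``excluding the singular part'' is needed, since positivity of $\mathcal J_f$ already gives $\mathcal J_f^{s}\ge 0$ and hence $\det Df\,\mathcal L^n\le\mathcal J_f$, which is all your estimate uses (vanishing of $\mathcal J_f^{s}$ is merely a consequence of $f\in W^{1,n}_{\mathrm{loc}}$, and the $L\log L$/reverse-H\"older remark is superfluous).
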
 

\begin{proof}
Let $\mathcal J_f$ denote the distributional Jacobian of $f$. 
By the nonnegative degree assumption, $f\in WOP(\Omega)$. Hence, by
De Philippis' result \cite[Proposition 5.4]{D2012}, $\mathcal J_f$ is a positive Radon measure. Moreover,
since $p>n-1$, the absolutely continuous part of $\mathcal J_f$ is
$$
    (\mathcal J_f)^{ac} = \det Df\,\mathcal L^n .
$$
Consequently $\det Df\ge0$ a.e. Let $U\Subset\Omega$. Then
$$
    \int_U |Df|^n\,dx
    \le K\int_U \det Df\,dx
    \le K\,\mathcal J_f(U)<\infty .
$$
Thus $f\in W^{1,n}_{\loc}(\Omega;\mathbb R^n)$, and the
distortion inequality gives quasiregularity.
\end{proof}

Note that, in Proposition~\ref{positive part}, we replaced non-signchanging  almost everywhere of $J_f$ to that of the distributional degree. Indeed it was noticed by \cite{BHM2017} that nonnegative degree is a more appropriate orientation-preserving notion for mappings in Sobolev class. 
Especially, when $K$ is sufficiently large so that
$$\frac{nK}{K+1}>n-1,$$
Proposition~\ref{positive part} tells that the non-signchanging of the distributional degree is essential for weakly $K$quasiregular mappings to be $K$-quasiregular.  
Namely, in the range $p>n-1$, replacing this pointwise sign condition
by the one-sided degree condition $f\in WOP$ restores the implication
from weak quasiregularity to quasiregularity.

\section{Proof of Theorem~\ref{main thm} and Theorem~\ref{dimension}}

Before giving the proof of Theorem~\ref{main thm}, let us first briefly recall the construction by Iwaniec--Martin \cite[Proof of Theorem 12.1]{IM1993}. They started with an \emph{exact packing of $\Omega$ by balls}, i.e. for every open set $\Omega\subset \mathbb R^n$ and any $\delta_0>0$, there exists a disjoint family 
$$\mathcal F=\{B_j\colon 1\le j<\infty\}$$
of open balls $B_j\subset \Omega$ so that $\diam (B_j)<\delta_0$ and 
$$\left|\Omega\setminus \bigcup_{j=1}^\infty B_j\right|=0;$$
see \cite[Section 1.5.1, Corollary 2]{EG1992}. Then for each ball $B_j=:B({\mathbf y}_j,\,r_j)$, they define 
$$\Phi_{B_j}(\mathbf x)={\mathbf y}_j+ (\mathbf x-{\mathbf y}_j)\left(\frac{r_j}{|\mathbf x-{\mathbf y}_j|}\right)^{1+\frac 1 K}.$$
At last, they set
$$F(\mathbf {x})=\Phi_{B_j}(\mathbf {x}) \quad  \text{ when } \ \mathbf {x}\in B_j, \quad F(\mathbf {x})=\mathbf {x} \quad \text{ otherwise},$$
and proved that $F\in  W^{1,\,p}_{\loc}(\Omega;\,\mathbb R^n)$ with $1\le p<\frac{nK}{K+1}.$

\medskip

The construction of our example follows the idea of Iwaniec--Martin and is partially motivated by \cite[Section 4]{KKM2001}, but via a more delicate truncation and iteration argument. 

\begin{proof}[Proof of Theorem~\ref{main thm}]
Given a ball $B(y,r)$, $\mathbf z\in \mathbb R^n$ and $t>0$ we define a mapping
$$\Phi_{B}^{\mathbf z,t}(\mathbf x)=\mathbf z+ t(\mathbf x-\mathbf y)\left(\frac{r}{|\mathbf x-\mathbf y|}\right)^{1+\frac 1 K}.$$
\noindent{\bf Step 1: Construction of  a weakly quasiregular mapping $F$.} Fix $0<a<1$ and consider an exact packing $\mathcal F_1=\{B_{1,\,j}\}_{j=1}^\infty, B_{1,\,j}=B(\mathbf {y}_{1,\,j},\,r_{1,\,j})$ of $\Omega=:U_1$ by balls  with radius no more than $\delta_1>0$. We define 
$$
F_1(\mathbf {x}) = \left\{ \begin{array}{ll}
\Phi^{\mathbf{y}_{1,j},1}_{B_{1,\,j}}(\mathbf {x}) & \textrm{ if } \ \mathbf x\in B_{1,\,j}\setminus a B_{1,\,j}\\
\mathbf {y}_{1,\,j}+a^{-\frac 1 K-1}(\mathbf x-\mathbf {y}_{1,\,j}) & \textrm{ if } \ \mathbf  x\in aB_{1,\,j}\\
\mathbf {x}  & \textrm{ otherwise }
\end{array} \right. .$$
This give us the first continuous map $F_1$.

We next construct $F_2$. Towards this, we consider  an exact packing $\mathcal F_2=\{B_{2,\,j}\}_{j=1}^\infty$ of 
$$U_2=\bigcup_{j=1}^\infty aB_{1,\,j}$$
by balls $B_{2,\,j}=B(\mathbf {y}_{2,\,j},\,r_{2,\,j})$ with radius no more than $\delta_2>0$.
Then we define
$$
F_2(\mathbf {x}) = \left\{ \begin{array}{ll}
\Phi^{F_1(\mathbf y_{2,j}),a^{-\frac{1}{K}-1}}_{B_{2,\,j}}(\mathbf {x}) & \textrm{ if } \ \mathbf x\in B_{2,\,j}\setminus a B_{2,\,j}\\
F_1(\mathbf {y}_{2,\,j})+a^{-\frac 2 K-2}(\mathbf x-\mathbf {y}_{2,\,j}) & \textrm{ if } \  \mathbf x\in aB_{2,\,j}\\
F_1(\mathbf x) & \textrm{ otherwise }
\end{array} \right. .$$
Note that for $\mathbf{x}\in \partial a B_{2,\,j}$ we have
$$
\Phi^{F_1(\mathbf{y}_{2,j}),a^{-\frac{1}{K}-1}}_{B_{2,\,j}}(\mathbf {x})=
F_1(\mathbf{y}_{2,j})+a^{-\frac{1}{K}-1} (\mathbf{x}-\mathbf{y}_{2,j})
\left(\frac{r_{2,\,j}}{ar_{2,\,j}}\right)^{1+\frac 1 K}=F_1(\mathbf {y}_{2,\,j})+a^{-\frac 2 K-2}(\mathbf x-\mathbf {y}_{2,\,j})
$$
and for $\mathbf{x}\in \partial B_{2,\,j}$, since $F_1$ is linear on $U_2$ we have
$$
\Phi^{F_1(\mathbf{y}_{2,j}),a^{-\frac{1}{K}-1}}_{B_{2,\,j}}(\mathbf {x})=
F_1(\mathbf{y}_{2,j})+a^{-\frac{1}{K}-1} (\mathbf{x}-\mathbf{y}_{2,j})
\left(\frac{r_{2,\,j}}{r_{2,\,j}}\right)^{1+\frac 1 K}= F_1(\mathbf{x})
$$
so the mapping is continuous.

Now we iterate this construction. Namely, given $\mathcal F_{k-1}$,   set 
$$U_k=\bigcup_{j=1}^\infty aB_{k-1,\,j},$$
and consider an exact packing $\mathcal F_k=\{B_{k,\,j}\}_{j=1}^\infty$ of
$U_k$ by balls $B_{k,\,j}=B(\mathbf {y}_{k,\,j},\,r_{k,\,j})$ with radius no more than $\delta_k>0$. Then define inductively
$$
F_k(\mathbf {x}) = \left\{ \begin{array}{ll}
\Phi^{F_{k-1}(\mathbf{y}_{k,j}),a^{(k-1)\left(-\frac{1}{K}-1\right)}}_{B_{k,\,j}}(\mathbf {x}) & \textrm{ if } \ \mathbf x\in B_{k,\,j}\setminus a B_{k,\,j}\\
F_{k-1}(\mathbf{y}_{k,j})+a^{-\frac {k} K-k}(\mathbf x-\mathbf {y}_{k,\,j})   & \textrm{ if } \  \mathbf x\in aB_{k,\,j}\\
F_{k-1}(\mathbf x) & \textrm{ otherwise }
\end{array} \right. $$
and again it is not difficult to check that this mapping is continuous. 
We further require that $\delta_k$ are chosen to satisfy
\begin{equation}\label{finite sum}
    \sum_{k=1}^\infty a^{-\frac k K-k} \delta_k<\infty. 
\end{equation}
This gives us a sequence of continuous mappings  $\{F_k\}$.

Moreover,  we claim that $F_k$ are converging uniformly. Indeed, according to the definition of $F_k$, the image of 
\begin{equation}\label{image}
F_k(B_{k,\,j}\setminus a B_{k,\,j})\subset  B(F_{k-1}(\mathbf{y}_{k,j}),\,a^{-\frac k K-k}\delta_k),
\end{equation}
and by \eqref{finite sum} it it easy to see that $F_k$ are uniformly bounded. Using the  Cauchy convergence \eqref{finite sum} it is also standard to show that $F_k$ converge uniformly to a continuous function $F$. 


In addition, according to our construction, it follows that
\begin{equation}\label{zero null set}
    \left|\Omega\setminus \bigcup_{k=1}^\infty\bigcup_{j=1}^\infty (B_{k,\,j}\setminus aB_{k,\,j})\right|=0,
\end{equation} 
and for every $\mathbf x\in B_{k,\,j}\setminus aB_{k,\,j}$,
\begin{equation}\label{easy}
F_{l}(\mathbf x) = F_k(\mathbf x)= F_{k-1}(\mathbf{y}_{k,\,j})+  a^{-(k-1) (1+\frac 1 K)}(\mathbf x-\mathbf y_{k,\,j})\left(\frac{r_{k,\,j}}{|\mathbf x-\mathbf y_{k,\,j}|}\right)^{1+\frac 1 K}, \quad \forall l\ge k,
\end{equation}
which coincides with $\Phi_{B_{k,\,j}}$ up to a translation and a dilation. 
Since each $\Phi_{B_{k,\,j}}$ is $K$-quasiregular and $\det DF\le 0$ almost everywhere, it follows that $F$ is weakly $K$-quasiregular.  However, $F$ is not $K$-quasiregular as its degree is positive on balls $B_{k,j}$ as our $F$ is just a positive linear transformation (i.e. $F(x)=a_{k,j}+a^{-(k-1)(\frac{1}{K}+1)} x$, see \eqref{easy}) on $\partial B_{k,j}$. 

\medskip

\noindent{\bf Step 2: Energy estimate of $F$.} Now we compute $|DF|$. Observe that when $\mathbf x\in B_{k,\,j}\setminus a B_{k,\,j}$
$$D\Phi_{B_{k,\,j}}(\mathbf x) =a^{-\frac {K+1} K(k-1)}\left(\frac{r_{k,\,j}}{|\mathbf {x}-\mathbf {y}_{k,\,j}|}\right)^{1+\frac 1 K} \left[Id-\frac{K+1}{K} \frac{(\mathbf {x}-\mathbf {y}_{k,\,j})\otimes (\mathbf {x}-\mathbf {y}_{k,\,j})}{|\mathbf {x}-\mathbf {y}_{k,\,j}|^2}\right].$$
Thus
$$|D\Phi_{B_{k,\,j}}(\mathbf x)|\le C(K)a^{-\frac {K+1} K(k-1)} r_{k,\,j}^{1+\frac 1 K}|\mathbf {x}-\mathbf {y}_{k,\,j}|^{-\left(1+\frac 1 K\right)},$$
and then
\begin{multline}\label{energy in one}
\int_{B_{k,\,j}\setminus a B_{k,\,j}} |D\Phi_{B_{k,\,j}}|^p\,dx\le  C(n,\,K) a^{-\frac {K+1} K(k-1)p} r_{k,\,j}^{p\left(1+\frac 1 K\right)}\int_{ar_{k,\,j}}^{r_{k,\,j}} s^{-p\left(1+\frac 1 K\right)+n-1}\, ds \\
\le C(n,\,K,\,a,\,p) a^{-\frac {K+1} K kp} r_{k,\,j}^{p\left(1+\frac 1 K\right)} r_{k,\,j}^{-p\left(1+\frac 1 K\right)+n}(1-a^{-p\left(1+\frac 1 K\right)+n})\le C(n,\,K,\,a,\,p) a^{-\frac {K+1} K kp}r_{k,\,j}^n,
\end{multline}
where we applied
$$-p\left(1+\frac 1 K\right)+n-1> -1 \quad \Longleftrightarrow \quad p<\frac{nK}{K+1}.$$
Now by summing over all $B_{k,\,j}$ and applying \eqref{energy in one} together with \eqref{zero null set}, we conclude that, for each $k_0\in \mathbb N$,
\begin{align*}
\int_{\Omega} |DF_{k_0}|^p\,dx\le & \ \sum_{k=1}^{k_0}\sum_{j=1}^\infty\int_{B_{k,\,j}\setminus a B_{k,\,j}} |D\Phi_{B_{k,\,j}}|^p\,dx + \sum_{j=1}^{\infty}\int_{B_{k_0,\,j}} |D\Phi_{B_{k_0,\,j}}|^p\,dx \\
\le  & \ C(n,\,K,\,a,\,p)\sum_{k,\,j=1}^\infty a^{-kp\frac{K+1} K }r_{k,\,j}^n  \\ 
= & \ C(n,\,K,\,a,\,p)\sum_{k=1}^\infty a^{-kp\frac{K+1} K }|U_{k}|\\
=&\ C(n,\,K,\,a,\,p)\sum_{k=1}^\infty a^{-kp\frac{K+1} K } a^{nk}|U_1|
\le C(n,\,K,\,a,\,p) |\Omega|<\infty, 
\end{align*}
where we applied that $\mathcal F_k$ is an exact packing of $U_k$ and $\frac{K}{K+1}n>p$.
Thus, we conclude $F\in W^{1,\,p}(\Omega)$ by the weak convergence of $F_k\rightharpoonup F$ in $W^{1,\,p}(\Omega)$, up to further passing to a subsequence. 
\end{proof}

\begin{proof}[Proof of Theorem~\ref{dimension}]
The proof of Theorem~\ref{dimension} is similar to the one of Theorem~\ref{main thm}, and we sketch the proof here. 

Let $a\in (0,\,1)$ be small enough to be determined later. our construction is similar to the one in the proof of Theorem~\ref{main thm}, except
\begin{itemize}
    \item when $p\geq n/2$, we use $\Phi_{B}^{\mathbf z,t}(\mathbf x)$ in the construction; 
\item In the case $p<n/2$, 
we replace $\Phi_{B}^{\mathbf z,t}(\mathbf x)$ by 
    $$\Psi^{\mathbf z,t}_B(\mathbf x)=\mathbf z+ t(\mathbf x-\mathbf y)\left(\frac{r}{|\mathbf x-\mathbf y|}\right)^{1+ K},$$
which is a $K$-quasiconformal mapping as well.
\end{itemize}

We choose $\delta>0$ small enough and we set 
\begin{equation}\label{choosedelta}
\delta_k=\begin{cases}
(1+\delta)^k a^{(1+\frac{1}{K})k} &\text{ for }\frac{n}{2}\leq p< n ,\\
(1+\delta)^k a^{(1+K)k} &\text{ for }1\leq p< \frac{n}{2}.\\
\end{cases}
\end{equation}
Note that in contrary to \eqref{finite sum} we now have infinite sum which will mean that images of $B_{k,j}$ are very often quite big in view of \eqref{image}. 

Now, when we choose the exact packing $\mathcal F_k=\{B_{k,\,j}\}$ of $U_k$ we choose many balls with radius exactly $\delta_k$ and the rest is covered by potentially smaller balls. We require that, for some $C_0=C_0(n)$, there are at least $C_0\left(\frac{a\delta_{k-1}}{\delta_k}\right)^n$-many balls $\hat B_{k,\,j}\in\mathcal F_k$ of radius $\delta_k$ inside the ball of radius $\delta_{k-1}$ from $\mathcal F_{k-1}$. This is possible as we have balls of radius $a\delta_{k-1}$ and put in them much smaller balls of radius $\delta_k$ (recall that $0<a<1$ is really small) and we can cover at least $C_0 \%$ of the volume of the ball of radius $a\delta_{k-1}$ where the constant depends only on the packing properties of $\mathbb R^n$. In this way we obtain a Cantor type set (with bad behavior of $F$) 
as the nested intersection of at least 
$$
\Bigl(C_0\left(\frac{a\delta_{k-1}}{\delta_k}\right)^n\Bigr)^{k-1}\text{ ball from }B_{k,j}
\text{ of radius exactly }\delta_k. 
$$
With the help of \eqref{choosedelta} we obtain that the Hausdorff dimension $d$ of this set can be computed from
$$
\Bigl(C_0\left(a^{-\frac{1}{K}}\right)^n\Bigr)^{k-1}
\bigl((1+\delta)^k a^{(1+\frac{1}{K})k}\bigr)^d\approx  1\text{ for }p\geq \frac{n}{2}
$$
and from 
$$
\Bigl(C_0\left(a^{-K}\right)^n\Bigr)^{k-1}
\bigl((1+\delta)^k a^{(1+K)k}\bigr)^d\approx 1\text{ for }p< \frac{n}{2}. 
$$
Now $C_0(n)$ and $\delta$ are fixed so we can choose $a$ small enough so that $C_0$ and $\delta$ are not important (up to something like $a^{\epsilon}$) in the computation above and we get that the dimension of the Cantor type set is at least 
$$
d=n\frac{1}{K+1}-\frac{\epsilon}{2} \  \text{ for } \ \frac{n}{2}\leq p< n \quad \text{ or }\quad 
d=n\frac{K}{K+1}-\frac{\epsilon}{2}\  \text{ for } \ 1\leq p<\frac{n}{2}. 
$$


In addition, a direct calculation similar to Step 2 of Theorem~\ref{main thm} yields 
$$\int_{\Omega} |DF_k|^p\,dx\le C(n,\,K,\,a,\,p)|\Omega|<\infty$$
whenever $1< p<\frac{n}{K+1}$ in the case $1< p<\frac{n}{2}$ and as before whenever $p<\frac{nK}{K+1}$ in the case $\frac{n}{2}\leq p<n$. Therefore, up to relabeling the sequence, there exists $F\in W^{1,\,p}(\Omega)$ so that $F_k\to F$ weakly in $W^{1,\,p}(\Omega)$ and strongly in $L^p$. 

Now for our fixed $\ez>0$ and every $p\in\left(1,\,n\right)$, we select the parameter $K_p$ and generate $F$ based on the range of $p$ as follows:
\begin{itemize}
    \item When $p\ge \frac n 2$, choose $K_p>\frac{p}{n-p}$ sufficiently close to $\frac{p}{n-p}$.
    \item When $p<\frac n 2$, choose $K_p<\frac n p -1$ sufficiently close to $\frac n p -1$.
\end{itemize}
We get the dimension of the Cantor type set as 
$$
d=
\begin{cases}
n\frac{1}{K+1}-\frac{\epsilon}{2}=n\frac{1}{\frac{p}{n-p}+1}-\epsilon=n-p-\epsilon&\text{ for }\frac{n}{2}\leq p<n,\\
d=n\frac{K}{K+1}-\frac{\epsilon}{2}=n\frac{\frac{n}{p}-1}{\frac{n}{p}-1+1}-\epsilon=n-p-\epsilon&\text{ for }1\leq p<\frac{n}{2}.\\
\end{cases}
$$

Let $p\geq \frac{n}{2}$ and let $x$ be a point of our Cantor type set. For each $k$ we  can find $B_{k,j}$ so that that $x\in B_{k,j}$ 
and analogously to \eqref{image} using \eqref{choosedelta} we obtain that 
$$
F_k(B_{k,j}\setminus a B_{k,j})=
B(F_{k-1}(y_{k,j}),(1+\delta)^{k})\setminus B(F_{k-1}(y_{k,j}),a(1+\delta)^{k-1})
$$
and from our construction it follows that $F=F_k$ on $B_{k,j}\setminus a B_{k,j}$. 
It follows that for each $r$ we can find $k$ so that $x\in B_{k,j}\subset B(x,r)$ but $x\in B_{k-1,\tilde{j}}\not\subset B(x,r)$ (and thus $|B(x,r)|\leq C(a)|B_{k,j}|$ as radius of $B_{k,j}$ is $\delta_k$ and radius of $B_{k-1,\tilde{j}}$ is $\delta_{k-1}$) and then
 we have
$$
-\hskip -12pt\int_{B(x,r)} |F|\geq C(a)(1+\delta)^{k-1}
$$
which tends to $\infty$ as $k\to \infty$. 
The argument for $x$ that belongs to the singular set for $p<\frac{n}{2}$ is similar. 
\end{proof}

\end{document}